\newtheorem{theorem}{Theorem}[section]
\newtheorem{lemma}[theorem]{Lemma}
\newtheorem{corollary}[theorem]{Corollary}
\newtheorem*{example}{Example}
\newtheorem{thm}{Theorem}
\newtheorem{corA}{Corollary}
\newtheorem{corB}{Corollary}
\begin{document}

\title[Contranormal-free groups]{On the structure of some contranormal-free groups}

\author[M. Dixon]{Martyn R. Dixon}
\address[Martyn R. Dixon]
{Department of Mathematics\\
University of Alabama\\
Tuscaloosa, AL 35487-0350, U.S.A.}
\email{mdixon@ua.edu}
\author[L.  Kurdachenko]{Leonid A. Kurdachenko}
\address[Leonid A. Kurdachenko]
{Department of Algebra, Facultet of mathematic and mechanik\\
National University of Dnepropetrovsk\\
Gagarin prospect 72\\
Dnipro 10, 49010, Ukraine.}
\email{lkurdachenko@i.ua, lkurdachenko@gmail.com}
\author[I. Subbotin]{Igor Ya. Subbotin}
\address[Igor Ya. Subbotin]{Department of Mathematics and Natural Sciences, Sanford National University\\
5245 Pacific Concourse Drive,  \\
Los Angeles, CA 90045-6904, USA.}
\email{isubboti@nu.edu}

\begin{abstract}
A subgroup $H$ of a group $G$ is contranormal if $H^G=G$. In finite groups if there are no proper contranormal subgroups, then the group is nilpotent but this is not true in infinite groups as the well-known Heineken-Mohamed groups show.  We call such groups without proper contranormal subgroups ``contranormal-free''.  In this paper we prove various results concerning contranormal-free groups proving, for example that locally generalized radical contranormal-free groups which have finite section rank  are hypercentral.
\end{abstract} 

%\thanks{The authors would like to the thank the referees for some helpful suggestions.}

\keywords{contanormal subgroup, hypercentral group, nilpotent group, finitely generated group}

\subjclass[2010]{Primary: 20E15; Secondary 20F16, 20F22}

\maketitle

\newcommand{\sub}[2]{\langle#1,#2\rangle}%subgp gen by two elements
\newcommand{\cyclic}[1]{\langle #1\rangle}%cyclic group
\newcommand{\norm}{\triangleleft\,}%normal subgroup
\newcommand{\core}[2]{\text{core}\,_{#1}\,#2}%the core of a subgroup
\newcommand{\cpin}{C_{p^{\infty}}}%Cpinfinity
\newcommand{\co}[1]{\,\scriptstyle{\mathbf{#1}}\textstyle}% closure op 
\newcommand{\cogo}[2]{\co{#1}\mathfrak{#2}}%closure op and class
\newcommand{\scco}[1]{\,\scriptscriptstyle{\mathbf{#1}}\scriptstyle}
\newcommand{\sccogo}[2]{\scco{#1}\mathfrak{#2}}
\newcommand{\sccogoln}{\scco{L}\mf{N}}
\newcommand{\ann}{\textbf{Ann}}%annihilator
\newcommand{\spec}{\text{Spec\,}}%spectrum
\newcommand{\listel}[2]{#1_1,\dots, #1_{#2}}%list of elements
\newcommand{\aut}{\text{Aut}\,}%aut G.
\newcommand{\cent}[2]{C_{#1}(#2)}%centralizer
\newcommand{\Dr}{\text{Dr}\,}%direct product
\newcommand{\dir}[3]{\underset{#1\in #2}{\Dr}#3_{#1}}%direct product
\newcommand{\nocl}[2]{\langle #1\rangle^{#2}}%normal closure of element
\newcommand{\qn}{\,\text{qn}\,}%quasinormal
\newcommand{\comm}[2]{#1^{-1}#2^{-1}#1#2}%commutator
\newcommand{\conj}[2]{#1^{-1}#2#1}%conjugate
\newcommand{\izer}[2]{N_{#1}(#2)}%normalizer
\newcommand{\gl}{\text{GL\,}}%general linear
\newcommand{\subgp}[3]{\langle#1_{#2}\mid #2\in#3\rangle}%subgp
\newcommand{\direct}[3]{\underset{#1\in #2}{\Dr}\langle#3_{#1}\rangle}
\newcommand{\omn}[1]{\Omega_n{(#1)}}
\newcommand{\Hom}{\text{Hom}\,}
\newcommand{\Cr}[3]{\underset{#1\in #2}{\text{Cr}}\,#3_{#1}}%Cart prod
\newcommand{\inflist}[1]{\{#1_1,#1_2,\dots\}} %infinite countable list
\newcommand{\cart}{\text{Cr}\,} %Cartesian product
\newcommand{\carti}[3]{\underset{#1\geq #2}\cart #3_{#1}} %cart product
\newcommand{\fgsub}[2]{\langle #1_1,\dots,#1_{#2}\rangle}%finitely gen subgp
\newcommand{\cwr}{\,\bar{\wr}\,}%complete wreath product
\newcommand{\minn}{\infty$-$\overline{\mathfrak{N}}}
\newcommand{\mnn}{\overline{\mathfrak{N}}}
\newcommand{\mnp}{\overline{\mathcal{P}}}
\newcommand{\mns}{\overline{\mathfrak{S}}}
\newcommand{\minsd}{\infty$-$\overline{\mathfrak{S}_d}}
\newcommand{\minnc}{\infty$-$\overline{\mathfrak{N}_c}}
\newcommand{\ms}{\mathfrak{S}}
\newcommand{\mn}{\mathfrak{N}}
\newcommand{\mf}[1]{\mathfrak{#1}}
\newcommand{\sdr}{\mathfrak{S}_d(r)}
\newcommand{\sd}{\mathfrak{S}_d}
\newcommand{\mfr}{\mathfrak{R}}
\newcommand{\ncr}{\mathfrak{N}_c(r)}
\newcommand{\mfsr}{\overline{\ms\mfr}}
\newcommand{\lslf}{\overline{(\cogo{L}{S})(\cogo{L}{F})}}
\newcommand{\mcp}{\mathcal{P}}
\newcommand{\mcps}{\mathcal{P}^*}
\newcommand{\mfas}{\mathfrak{A}^*}
\newcommand{\mfa}{\mathfrak{A}}
\newcommand{\mfss}{\mathfrak{S}^*}
\newcommand{\mfns}{\mathfrak{N}^*}
\newcommand{\wmcnq}{min-$\infty$-$\overline{\text{qn}}$ }
\newcommand{\mfx}{\mathfrak{X}}
\newcommand{\frat}{\text{Frat}}
\newcommand{\mrn}{\mathfrak{R}\mathfrak{N}}
\newcommand{\dmn}{dim}
\newcommand{\rzero}{\textbf{r}_0}
\newcommand{\dl}{\textbf{dl}}
\newcommand{\srp}{\textbf{sr}_p}
\newcommand{\mfy}{\mathfrak{Y}}
\newcommand{\mln}{\text{$\mathbf{L}$}\mathfrak{N}}
\newcommand{\tor}{\textbf{Tor}}
\newcommand{\omegn}[2]{\boldsymbol{\Omega}_{#1}(#2)} 
\newcommand{\cogoln}{\co{L}\mf{N}}
\newcommand{\ldl}{\leq\dots\leq}
\newcommand{\sprk}{\textbf{r}}
\newcommand{\cl}{\textbf{cl}}
\newcommand{\bfd}{\textbf{d}}
\newcommand{\zl}{\textbf{zl}}
\newcommand{\mfn}{\mathfrak{N}}
\newcommand{\rz}{\mathbf{r}_{\mathbb{Z}}}%torsion-free rank
\newcommand{\dimbf}{\textbf{dim}}
\newcommand{\kerbf}{\textbf{ker}}
\newcommand{\imbf}{\textbf{Im}}
\newcommand{\short}[3]{#1=#1_0\leq #1_1\leq\dots\leq #1_{#2}=#3} %short non-normal series
\newcommand{\mmx}{\textbf{mmx}}
\newcommand{\bfs}{\textbf{s}}
\newcommand{\bfo}{\textbf{O}}
\newcommand{\zr}{\textbf{r}_{\text{Z}}}
\newcommand{\bs}{\textbf{bs}}
\newcommand{\shortserone}[3]{1=#1_0\norm #1_1\norm\dots\norm #1_{#2}=#3} %eg nilpotent
\newcommand{\agemo}[2]{\boldsymbol{\mho}_{#1}(#2)} 
\newcommand{\bfl}{\textbf{l}}
\newcommand{\zrp}{\textbf{r}_{\text{Z},p}} %zaitsev rank of p-component
\newcommand{\bflo}{\textbf{o}}
\newcommand{\bfr}{\textbf{r}}
\newcommand{\bffc}{\textbf{FC}}
\newcommand{\bfe}{\textbf{E}}
\newcommand{\neng}[3]{\bfe_{#1,#2}(#3)}
\newcommand{\mcal}[1]{\mathcal{#1}}
\newcommand{\centdim}{\textbf{centdim}}
\newcommand{\cobf}{\textbf{codim}}
\newcommand{\augdim}{\textbf{augdim}}
\newcommand{\codim}{\textbf{codim}}
\newcommand{\chl}{\textbf{chl}}
\newcommand{\cha}{\textbf{char}} 
%%%%%%%%%%%%%%%%%%%%%%greek
\newcommand{\al}{\alpha}
\newcommand{\de}{\delta}
\newcommand{\la}{\lambda}
\newcommand{\ga}{\gamma}
\newcommand{\Ga}{\Gamma}
\newcommand{\Si}{\Sigma}
\newcommand{\La}{\Lambda}
\newcommand{\ka}{\kappa}
\newcommand{\De}{\Delta}
\newcommand{\Th}{\Theta}
\newcommand{\ze}{\zeta}
\newcommand{\om}{\omega}
\newcommand{\be}{\beta}
\newcommand{\si}{\sigma}
\newcommand{\Om}{\Omega}
\newcommand{\ass}{\textbf{Ass}}
%%%%%%%%%%%%%%%%%%%%%%%%%%%%%%%%%
\newcommand{\fn}[3]{#1: #2\longrightarrow #3} % function from one set to another
\newcommand{\bigset}[3]{\{#1_{#2} | #2\in #3\}}
\newcommand{\mat}[2]{\text{Mat}_{#1}(#2)}
\newcommand{\mx}[4]{[#1_{#2 #3}]_{#2, #3\in #4}}
\newcommand{\mxel}[3]{#1_{#2 #3}}
\newcommand{\censeries}[4]{0=\ze_{{#1},0}(#2)\leq \ze_{{#1},1}(#2)\leq\dots \ze_{{#1},{#3}}(#2)\leq \ze_{{#1}, {#3} +1}(#2)\leq\dots \ze_{{#1}, {#4}}(#2)} % central series
\newcommand{\modseries}[3]{0=#1_0\leq #1_1\leq\dots #1_{#2}\leq #1_{#2 +1}\leq\dots #1_{#3}} %series
\newcommand{\shortchain}[3]{0=#1_0\leq #1_1\leq\dots\leq #1_{#2}=#3} %subnormal series
\newcommand{\descseries}[3]{#1_0 \geq #1_1\geq\dots\geq #1_{#2}\geq #1_{#2+1}\geq\dots \bigcap_{#2\in #3}#1_{#2}}
\newcommand{\lowcenser}[4] {\ga_{#1, 1}(#2)\geq \ga_{#1,2}(#2)\geq \dots \ga_{#1, #3}(#2)\geq \ga_{#1,#3+1}(#2)\geq \dots \ga_{#1,#4}(#2)}
\newcommand{\normbf}[2]{\textbf{Norm}_{#1}(#2)}
\newcommand{\charbf}{\textbf{char}\,}
\newcommand{\inv}{\textbf{Inv}}
\newcommand{\mon}{\textbf{mon}}

\section{Introduction} \label{s:intro}

In this  paper we consider certain subgroups of a group $G$ whose properties have not yet been studied in great detail, especially in infinite groups, but which, nevertheless, have significant control over the group structure.

If $H$ is a subgroup of a group $G$, then as usual we define the normal closure of $H$ in $G$ to be the smallest normal subgroup of $G$ which contains $H$. We denote the normal closure of $H$ in $G$ by $H^G$ and clearly $H^G=\cyclic{H^x| x\in G}$.  Naturally the subgroup $H$ is normal in $G$ if $H^G=H$ and in this sense the subgroups $H$ for which $H^G=G$ are complete opposites of normal subgroups.

Following J. S. Rose~\cite{jR68} we shall call a subgroup  $H$  of a group  $G$    \emph{contranormal}  in  $G$ if $H^G=G$.

In some sense contranormal subgroups spread their influence over the entire group: all elements of a group are in their normal closures  while all other subgroups have smaller spheres of influence.  In this regard,  contranormal subgroups are opposite to normal and subnormal subgroups. It is clear that a contranormal subgroup $H$ of a group $G$ is normal (or subnormal) if and only if $H=G$.

Contranormal subgroups occur naturally in the following way. If $G$ is a group and $H$ is a
subgroup of $G$ we can construct the following canonical series. 

We let $v_{0,G}(H)=G, v_{1,G}(H)=H^G$ and define $v_{\al+1,G}(H)=H^{v_{\al, G}(H)}$ for every ordinal $\al$ and $v_{\la,G}(H)=\cap_{\be<\la} v_{\be,G}(H)$ for all limit ordinals $\la$.

We may then construct the \emph{lower normal closure series}
\[
G=v_{0,G}(H)\geq v_{1,G}(H)\geq \dots v_{\al, G}(H)\geq v_{\al+1,G}(H)\geq \dots v_{\ga,G}(H)=D
\]
of a subgroup  $H$  in the group  $G$. By  construction  $ v_{\al+1,G}(H)$ is a normal subgroup of     
$v_{\al,G}(H)$ for all ordinals  $\al <\ga$.  The last term  $D$  of this series has the property that   $H^D=D$. The last term  of this series, $v_{\ga,G}(H)$, is called the \emph{lower normal  closure  of  $H$  in  $G$}.

We note that if $H$ is a subgroup of $G$, then $H$ is contranormal in its lower normal closure in $G$. Furthermore the lower normal closure of $H$ in $G$ is a descendant subgroup of $G$.

Important examples of  contranormal subgroups include abnormal subgroups. We recall that a subgroup $H$ of a group $G$ is \emph{abnormal} in $G$  if $g\in \cyclic{H,H^g}$ for all elements $g\in G$.

However, these two types of subgroup do not coincide and the influence of these subgroups on the structure of groups is different.

When $G$ is a finite nilpotent group,  it is clear that $G$ contains no proper contranormal subgroups; furthermore, if $G$ has no proper contranormal subgroups, then the maximal subgroups of $G$ must be normal and in this case $G$ is nilpotent (see \cite[Satz III.3.11]{bH67}).  However, for infinite groups the situation is different. If $G$ is an infinite locally nilpotent group, then $G$ does not contain proper abnormal subgroups, whereas a locally nilpotent group can contain proper contranormal subgroups; indeed there are even hypercentral groups with proper contranormal subgroups. 

For example, let $D$ be a divisible abelian $2$-group and let $\al$ denote the inversion automorphism of $D$. Let $G=D\rtimes \cyclic{\al}$. It is clear that $G$ is a hypercentral, abelian-by-finite group and it is easy to see that $\cyclic{\al}$ is contranormal in $G$. As another example we can let $D$ be a Pr\"{u}fer $2$-group and let $\al$ be an automorphism of $D$ of infinite order. Then $G=D\rtimes \cyclic{\al}$ is hypercentral and again $\cyclic{\al}$ is contranormal in $G$.  Since every subgroup of a hypercentral group is ascendant it is clear that in either case $\cyclic{\al}$ is an ascendant subgroup of $G$.

The following natural question arises.

\begin{quote}
What can be said concerning a group $G$ which contains no proper contranormal subgroups?
\end{quote}

We shall say that a group $G$ is \emph{contranormal-free} if $G$ contains no proper contranormal subgroups. (In \cite{bW20} the term ``Rose-nilpotent'' is used.)

Various types of contranormal-free groups have been studied in the papers \cite{KOS09b,KOS09,  KS03b,  bW20}.  That such groups need not even be locally nilpotent is exhibited by examples in \cite{KOS09b} and \cite{bW20} where a general method is given for constructing such groups.  For the most part though these papers have been concerned with obtaining results showing that certain types of contranormal-free groups are nilpotent.  In this paper we continue the study of contranormal-free groups but now direct attention to obtaining conditions for a contranormal-free group to be  hypercentral.  Our main results are given in the following theorems. First we obtain:

\begin{thm}\label{t:1}
Let $G$ be a group and let $L$ be a nilpotent normal subgroup of $G$ such that $G/L$ is a finitely generated hyper (abelian or finite) group.  If $G$ is contranormal-free, then $G$ is hypercentral.
\end{thm}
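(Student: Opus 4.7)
The approach has two stages: first, reduce to a controlled structure for $G/L$; second, push $L$ into the hypercenter of $G$. The class of contranormal-free groups is closed under quotients, since a contranormal subgroup $\bar H$ of $G/N$ lifts to a subgroup $H\le G$ with $H^GN=G$, forcing $H^G=G$. Hence $G/L$ is itself a finitely generated, hyper (abelian or finite), contranormal-free group.

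In any contranormal-free group every maximal subgroup is normal: a maximal subgroup $M\ne G/L$ satisfies $M\le M^{G/L}\ne G/L$, so $M=M^{G/L}$ by maximality. Thus every maximal subgroup of $G/L$ has prime index and $(G/L)/\Phi(G/L)$ is abelian. Combining this with the hyper (abelian or finite) hypothesis and finite generation, I would establish that $G/L$ is in fact finitely generated nilpotent; this is the main content of Stage~1. The argument should follow the usual pattern for finitely generated generalized soluble groups in which normality of maximal subgroups propagates up the ascending series with abelian-or-finite factors.

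With $G/L$ nilpotent, I pass to $\bar G=G/\zeta_\infty(G)$ and argue by contradiction that $\bar G=1$. The quotient $\bar G$ is contranormal-free, has trivial center by construction, and $\bar L$ (the image of $L$) is nilpotent normal in $\bar G$ with $\bar G/\bar L$ a quotient of the nilpotent group $G/L$, hence nilpotent. If $\bar L=1$ then $\bar G$ is itself finitely generated nilpotent with trivial center, hence trivial, a contradiction. Otherwise, the last nontrivial term $A$ of the lower central series of $\bar L$ is an abelian subgroup characteristic in $\bar L$, hence normal in $\bar G$, and centralized by $\bar L$; so $\bar G/\cent{\bar G}{A}$ is a quotient of the nilpotent group $\bar G/\bar L$ acting on the abelian group $A$. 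The goal is then to produce a nontrivial $\bar G$-fixed element of $A$, which would land in $\zeta_1(\bar G)$ and contradict the triviality of the center.

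\textbf{Main obstacle.} The crux is this final fixed-point argument: showing that in a contranormal-free group $\bar G$, a nontrivial abelian normal subgroup $A$ with nilpotent action of $\bar G/\cent{\bar G}{A}$ must meet $\zeta_1(\bar G)$ nontrivially. The examples in the introduction (a divisible abelian $2$-group inverted by an involution, or a Pr\"ufer $2$-group with an automorphism of infinite order) show that fixed-point-free nilpotent actions on abelian groups can and do occur in the hypercentral setting; in each of those examples, however, a complement to $A$ turns out to be contranormal. So the fixed-point argument must explicitly leverage the contranormal-free hypothesis by constructing, in the hypothetical fixed-point-free case, a subgroup whose normal closure exhausts $\bar G$, thereby producing a forbidden proper contranormal subgroup and closing the contradiction.
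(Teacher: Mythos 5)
Your outline has the right global shape --- first reduce to the case where $G/L$ is nilpotent, then deal with the nilpotent normal subgroup $L$ --- and your Stage~1 is essentially the paper's Corollary~\ref{c:2}. But note that Stage~1 is only sketched: the observation that every maximal subgroup is normal of prime index does not by itself yield nilpotence for an infinite finitely generated group (there are finitely generated infinite torsion groups in which every maximal subgroup is normal of prime index that are far from nilpotent), and the paper's Corollary~\ref{c:2} rests on Robinson's theorem that a non-nilpotent finitely generated hyper(abelian or finite) group has a finite non-nilpotent image. The genuine gap, however, is the step you yourself label the ``main obstacle'': you never prove that a nontrivial abelian normal subgroup $A$ of the centre-free quotient $\bar G=G/\zeta_\infty(G)$ must meet the centre. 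That claim is the entire mathematical content of the theorem, and it is not a soft consequence of contranormal-freeness: the Heineken--Mohamed groups are contranormal-free with trivial centre and have a nontrivial abelian normal subgroup (the derived subgroup) admitting no nontrivial fixed points, so any correct argument must use the finite generation of $\bar G/\bar L$ in an essential, quantitative way. As written, the proposal records where the difficulty lies but does not overcome it.

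The paper supplies exactly the missing ingredient, by a different route that avoids passing to $G/\zeta_\infty(G)$. Lemma~\ref{l:7} shows, via P.~Hall's finiteness theorem for finitely generated monolithic abelian-by-nilpotent groups and Zaitsev's $Z(G)$-decomposition, that if $A$ is abelian normal with $G/A$ finitely generated nilpotent and $G$ is not locally nilpotent, then one can manufacture a proper contranormal subgroup of the form $SD$ satisfying $G=S^GD=S[S,A]D$; hence (Corollary~\ref{c:8}) a contranormal-free such $G$ is locally nilpotent. Theorem~\ref{t:1a} then climbs the upper central series $1=A_0\le\dots\le A_t=L$ of $L$: for each finite subset $M$ of $A_1$ the normal closure $\cyclic{M}^G$ is a finitely generated abelian $G$-invariant subgroup contained in finitely many terms of the upper $G$-central series, whence $A_1\le\ze_{G,\om}(L)$ and, inductively, $L\le\ze_{G,t\om}(L)$; this gives hypercentrality directly rather than by contradiction. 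To complete your plan you would have to reprove something equivalent to Lemma~\ref{l:7} inside your fixed-point step, at which point you would in effect be reconstructing the paper's argument.
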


%We give an example later in the article to show that in the situation of Theorem~\ref{t:1}, the group %$G$ need not be nilpotent. 

We next recall that a group $G$ is \emph{generalized radical} if it has an ascending series whose factors are locally nilpotent or locally finite.

Let $p$ be a prime. We say that $G$ has \emph{finite section $p$-rank} $\srp(G)=r$ if every elementary abelian $p$-section of $G$ is finite of order at most $p^r$ and there is an elementary abelian $p$-section $A/B$ of $G$ such that $|A/B|=p^r$.

We say that the group $G$ has \emph{finite section rank} if $\srp(G)$ is finite for each prime $p$.

We let $\om$ denote the first infinite ordinal. If $G$ is a group, then we let $\Pi(G)$ denote the set of primes $p$ for which $G$ has an element of order $p$.

We may now state our next main result.

\begin{thm}\label{t:2}
Let $G$ be a locally generalized radical group with finite section rank. If $G$ is contranormal-free, then $G$ is a hypercentral group of hypercentral length at most $\om +k$ for some natural number $k$. Moreover, if $H$  is normal in $G$ and  $\Pi(G/H)$ is finite,  then $G/H$ is nilpotent.
\end{thm}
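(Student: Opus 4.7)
The overall strategy is to exploit the detailed structure theory of locally generalized radical groups of finite section rank, and then use the contranormal-free hypothesis to force that structure to collapse enough to apply Theorem~\ref{t:1}. Such a group $G$ is soluble-by-finite and, more precisely, admits a canonical normal series $1\leq T\leq S\leq G$ in which $T$ is the periodic radical (which is a Chernikov-like extension, built from finitely many Pr\"ufer components together with a section of bounded section rank), $S/T$ is torsion-free abelian of finite $0$-rank, and $G/S$ is finite. Finite contranormal-free groups are already nilpotent (as noted in the introduction, via \cite[Satz III.3.11]{bH67}), so the top quotient $G/S$ can be absorbed, and the bulk of the work is to understand the structure below.

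First I would produce a nilpotent normal subgroup $N$ of $G$ such that $G/N$ is finitely generated and hyper (abelian or finite), so that Theorem~\ref{t:1} applies to deduce hypercentrality of $G$. A natural candidate for $N$ is built from the Hirsch--Plotkin radical together with a suitable finite-rank nilpotent piece drawn from $S$: since $G$ has finite section rank and is contranormal-free, every proper ascendant-like behaviour forces centralization, and the torsion-free soluble part $S/T$ of finite rank must act nilpotently. Combining, one obtains $N$ with the required quotient, and Theorem~\ref{t:1} gives $G$ hypercentral.

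Next, to establish the bound $\om+k$ on the hypercentral length, I would analyse the two regions of the upper central series separately. The periodic part $T$ is a direct product of finitely many Pr\"ufer subgroups on top of a bounded piece; its socle chain $\Omega_1(T)\leq \Omega_2(T)\leq\dots$ is countable and, because $G$ is now known to be hypercentral, each $\Omega_n(T)$ lies in some $\zeta_i(G)$, giving $T\leq \zeta_\om(G)$. Above $T$, the quotient $G/T$ has finite $0$-rank, is generalized radical, and is still contranormal-free; an induction on the $0$-rank, combined once more with Theorem~\ref{t:1} applied to $G/T$, shows that $G/\zeta_\om(G)$ is nilpotent of some class $k$, so $\zeta_{\om+k}(G)=G$. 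The main technical obstacle is showing that the Pr\"ufer components of $T$ really are absorbed into $\zeta_\om(G)$ rather than sitting transversally to the upper central series; this requires checking that the action of $G$ on each Pr\"ufer component $P$ stabilises the chain $\{\Omega_n(P)\}$ at every finite step, and this is where the contranormal-free hypothesis has to be used most carefully.

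Finally, for the ``moreover'' clause, suppose $H\norm G$ with $\Pi(G/H)$ finite. Then the periodic radical of $G/H$ involves only finitely many primes, and, together with the inherited finite section rank, it is a Chernikov group. In a contranormal-free Chernikov group the divisible part must in fact be centralised: any non-central Pr\"ufer component would supply, via an element of the acting group whose action on $P$ has infinite order, a proper contranormal subgroup of the semidirect structure (much as in the Pr\"ufer example in the introduction, but now being forbidden). Consequently the $\om$-portion of the upper central series of $G/H$ collapses to finitely many steps, and adding the residual $k$ from the torsion-free layer gives finite hypercentral length, that is, nilpotency of $G/H$.
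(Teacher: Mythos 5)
Your proposal diverges from the paper's argument and contains two genuine gaps. First, your opening step --- producing a nilpotent normal subgroup $N$ with $G/N$ finitely generated and hyper (abelian or finite) so that Theorem~\ref{t:1} applies --- is not available in general. Consider $G=\dir{p}{\mcal{P}}{P}$, a direct product over infinitely many primes of finite $p$-groups $P_p$ whose nilpotency classes are unbounded: this is locally finite, locally nilpotent, of finite section rank and contranormal-free, yet any normal subgroup $N$ with $G/N$ finitely generated has finite index, and $G$ is not nilpotent-by-finite. The paper does not route Theorem~\ref{t:2} through Theorem~\ref{t:1} at all; instead it takes the series $T\leq S\leq G$ from \cite[Theorem 4.2.1]{DKS17} and, for each prime $p$, applies \cite[Theorem C]{KOS09} to conclude that $G/O_{p'}(T)$ is \emph{nilpotent}, of some class $n(p)$. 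That per-prime finite bound is the engine of the whole proof, and nothing in your sketch supplies it.

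Second, your argument that $T\leq\ze_{\om}(G)$ is a non sequitur: hypercentrality of $G$ does not imply that a prescribed subgroup such as $\Om_n(T)$ lies in a \emph{finite} term $\ze_i(G)$ of the upper central series (in the example above, $\Om_1(T)$ can equal $T$ and lies in no finite term). The paper gets $T\leq\ze_{\om}(G)$ differently: from the Remak embedding of $G$ into $\underset{p}{\cart}\,G/O_{p'}(T)$ it deduces that $T$ is locally nilpotent, hence $T=\dir{p}{\Pi(T)}{S}$ with each Sylow subgroup $S_p$ satisfying $[S_p,_{n(p)}G]\leq S_p\cap O_{p'}(T)=1$, so $S_p\leq\ze_{n(p)}(G)$; every element of $T$ then lies in $\ze_{m}(G)$ for $m=\max$ of finitely many $n(p)$. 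Your instinct in the ``moreover'' clause --- that a non-central Pr\"ufer component would yield a proper contranormal subgroup --- is exactly the content hidden inside \cite[Theorem C]{KOS09}, but in your write-up it appears only at the end as a sketch, whereas it is needed (in the packaged form of that cited theorem) at the very start of the main argument. As it stands, the proposal would need to either prove or cite that result to be repaired.
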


A group $G$ has finite special rank $\bfr(G)=r$ if every finitely generated subgroup of $G$ can be generated by $r$ elements and $r$ is the least natural number with this property.

The following corollary is immediate.

\begin{corB}\label{c:3}
Let $G$ be a locally generalized radical group with finite special rank. If $G$ is contranormal-free, then $G$ is a hypercentral group of hypercentral length at most $\om +k$ for some natural number $k$. Moreover, if $H$  is normal in $G$ and  $\Pi(G/H)$ is finite,  then $G/H$ is nilpotent.
\end{corB}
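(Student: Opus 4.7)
The plan is to derive Corollary B directly from Theorem~\ref{t:2} by showing that finite special rank is a (strictly) stronger hypothesis than finite section rank. More precisely, I would prove that if $G$ has finite special rank $\bfr(G)=r$, then $\srp(G)\leq r$ for every prime $p$, so in particular $G$ has finite section rank. Once this is established, both conclusions of Corollary B follow verbatim from Theorem~\ref{t:2}, since the hypothesis ``locally generalized radical'' and ``contranormal-free'' are assumed unchanged, and the conclusion one wants is exactly the one the theorem provides.

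To carry out the section-rank bound, let $A/B$ be an arbitrary elementary abelian $p$-section of $G$. I claim $|A/B|\leq p^r$. Suppose for contradiction that $A/B$ contains a subgroup $C/B$ of order at least $p^{r+1}$. Pick elements $x_1,\dots,x_{r+1}\in C$ whose cosets modulo $B$ are linearly independent over $\mathbb{F}_p$, and set $F=\langle x_1,\dots,x_{r+1}\rangle$. Because $F$ is a finitely generated subgroup of $G$, the hypothesis $\bfr(G)=r$ gives that $F$ can be generated by $r$ elements, hence every homomorphic image of $F$ is an $r$-generated group. But the natural map $F\longrightarrow C/B$ has image $FB/B$ which, by the choice of the $x_i$, contains $r+1$ linearly independent elements of the elementary abelian $p$-group $C/B$ and therefore has order at least $p^{r+1}$, i.e., cannot be generated by $r$ elements. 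This contradiction establishes $\srp(G)\leq r$.

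Applying Theorem~\ref{t:2} to $G$ now yields that $G$ is hypercentral of hypercentral length at most $\omega+k$ for some natural number $k$, and that whenever $H\norm G$ with $\Pi(G/H)$ finite, the quotient $G/H$ is nilpotent. This is precisely the statement of Corollary B.

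There is essentially no serious obstacle in this argument; the only step requiring a small amount of care is the abelian/finitely generated bookkeeping that converts the special-rank bound on $F$ into a section bound on the elementary abelian quotient $FB/B$. Since one can work entirely inside the elementary abelian group $C/B$, where ``$r$-generated'' and ``order $\leq p^r$'' coincide, this bookkeeping reduces to the observation that a quotient of an $r$-generated group is $r$-generated, which is immediate.
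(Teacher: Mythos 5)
Your proposal is correct and matches the paper's intent exactly: the paper states Corollary B as an immediate consequence of Theorem~\ref{t:2}, the point being precisely that finite special rank $r$ forces $\srp(G)\leq r$ for every prime $p$, hence finite section rank. Your verification of that implication (an $r$-generated group has every elementary abelian $p$-quotient of order at most $p^r$) is the standard and correct argument, so nothing further is needed.
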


Clearly a contranormal-free group of finite special rank need not be nilpotent.

%%%%%%%%%%%%%%%%%%%%%%%%%%%%%

\section[Preliminary Results]{Preliminary Results} \label{s:fininf}

We begin with the following elementary result which is often useful.

\begin{lemma}\label{l:1}
Let $G$ be a group. 
\begin{enumerate}
\item[  (i)] If $C$ is a contranormal subgroup of $G$ and $K$ is a subgroup containing $C$, then $K$ is a contranormal subgroup of $G$.
\item[ (ii)] If $C$ is a contranormal subgroup of $G$ and $H$ is a normal subgroup of $G$, then $CH/H$  is a contranormal subgroup of $G/H$.
\item[(iii)] If $H$ is a normal subgroup of $G$, $C$ is a subgroup of $G$ such that $H\leq C$ and $C/H$  is a contranormal subgroup of  $G/H$,  then $C$ is a contranormal subgroup  of $G$.
\item[ (iv)] If $C$ is a contranormal subgroup of $G$ and if $D$ is a contranormal subgroup of $C$, then $D$ is a contranormal subgroup of $G$.
\item[  (v)] If $M$ is a maximal subgroup of $G$ and $M$ is not normal in $G$,  then $M$ is a contranormal subgroup of $G$.
\end{enumerate}
\end{lemma}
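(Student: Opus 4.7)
The plan is to prove all five parts by direct, short arguments, using only the definition of normal closure $S^G = \langle S^g \mid g \in G\rangle$ together with the fact that $S^G$ is the smallest normal subgroup of $G$ containing $S$. The one auxiliary identity I will use repeatedly is that, for $H \trianglelefteq G$ and any $A \leq G$, the normal closure in $G$ of $AH$ equals $A^G H$, and correspondingly the normal closure of $AH/H$ in $G/H$ equals $A^G H/H$.

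For (i), since $C \leq K$, every conjugate of $C$ is contained in the corresponding conjugate of $K$, so $G = C^G \leq K^G \leq G$ forces $K^G = G$. For (ii), the normal closure of $CH/H$ in $G/H$ is, by the identity above, $C^G H/H = GH/H = G/H$. For (iv), the normal closure $D^G$ in $G$ contains $D^C$, which equals $C$ by hypothesis; since $D^G$ is normal in $G$ and contains $C$, it must contain $C^G = G$. For (v), $M^G$ is a normal subgroup of $G$ sandwiched between $M$ and $G$, and by the maximality of $M$ it must equal one of them; the case $M^G = M$ would make $M$ normal in $G$, contradicting the hypothesis, so $M^G = G$.

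The subtlest of the five is (iii). The hypothesis that $C/H$ is contranormal in $G/H$ translates, via the identity above, to $C^G H = G$. A priori this does not immediately yield $C^G = G$, but since $H \leq C \leq C^G$, the product $C^G H$ is already equal to $C^G$, and we conclude $C^G = G$. This absorption of $H$ into $C^G$ is really the only point in the whole lemma that asks for more than a literal unwinding of the definition; the remainder is purely formal, and I would expect the proof in the paper to dispatch all five parts in at most a few lines each.
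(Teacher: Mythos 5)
Your proof is correct in all five parts, and the key observation you flag in (iii) — that $H\leq C\leq C^G$ lets $C^G$ absorb $H$, turning $C^GH=G$ into $C^G=G$ — is exactly the right point to isolate. The paper itself states this lemma without proof, treating it as elementary, so there is no argument to compare against; your arguments are the standard ones and would serve as a complete proof.
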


We shall use the following fact regularly and note that it is immediate from \cite[5.1.6]{dR96}.

\begin{lemma}\label{l:2}
Let $G$ be a group, let $A$ be normal subgroup of $G$ and let $S$ be a subgroup of $G$. Suppose that  $C=SA$.  Then $S^C=S[S,A]$.
\end{lemma}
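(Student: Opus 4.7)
The plan is to prove equality by checking both inclusions, making essential use of the identity $s^a=s[s,a]$ together with the fact (cited in the statement) that $[S,A]$ is normal in $\langle S,A\rangle=SA$.

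For the inclusion $S[S,A]\leq S^C$, since $S\leq S^C$ it suffices to prove that $[S,A]\leq S^C$. For any $s\in S$ and $a\in A\leq C$, the conjugate $s^a$ lies in $S^C$ because $S^C\triangleleft C$; hence $[s,a]=s^{-1}s^a\in S^C$, giving $[S,A]\leq S^C$ and so $S[S,A]\leq S^C$.

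For the reverse inclusion, it is enough to verify that $S[S,A]$ is a subgroup of $C$, contains $S$, and is normal in $C=SA$, since $S^C$ is by definition the smallest normal subgroup of $C$ containing $S$. It is a subgroup because $[S,A]\triangleleft SA$, and containment of $S$ is immediate. Normality splits into conjugation by elements of $S$ and by elements of $A$: conjugation by $s'\in S$ stabilizes $S$ and stabilizes $[S,A]$ via $[s,a]^{s'}=[s^{s'},a^{s'}]$, where $a^{s'}\in A$ thanks to $A\triangleleft G$; while conjugation by $a\in A$ sends each $s\in S$ to $s[s,a]\in S[S,A]$ and stabilizes $[S,A]$ by the same normality fact. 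Hence $S[S,A]\triangleleft C$ and the inclusion $S^C\leq S[S,A]$ follows.

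I do not foresee any real obstacle: the entire argument rests on the identity $s^a=s[s,a]$ together with the elementary fact that $[S,A]\triangleleft\langle S,A\rangle$, which is the appeal to \cite[5.1.6]{dR96} mentioned in the statement.
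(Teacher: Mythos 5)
Your proof is correct: the paper itself gives no argument, simply citing \cite[5.1.6]{dR96} (which is precisely the identity $H^K=H[H,K]$ applied with $H=S$, $K=A$, noting $S^C=S^{SA}=S^A$), and your double-inclusion verification is exactly the standard proof of that cited fact. Both directions check out, including the use of $[S,A]\triangleleft\langle S,A\rangle$ and of $A\triangleleft G$ to see that conjugation by elements of $S$ preserves $[S,A]$.
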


\begin{lemma}\label{l:3}
Let $G$ be a group and let $A$ be a normal subgroup of $G$. Suppose that $A$ has an ascending series of $G$-invariant subgroups
\[
1=A_0\leq A_1\leq \dots\leq  A_{\al}\leq A_{\al+1}\leq \dots A_{\ga}=A
\]
such that $[G, A_{\al+1}/A_{\al}]=A_{\al+1}/A_{\al}$ for all $\al<\ga$. Then $[G,A]=A$.
\end{lemma}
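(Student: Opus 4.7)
The plan is to prove by transfinite induction on $\al$ that $A_\al\leq [G,A]$ for every $\al\leq\ga$; the desired equality then follows at $\al=\ga$, since the reverse inclusion $[G,A]\leq A$ is automatic because $A$ is normal in $G$.

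The base case $A_0=1\leq[G,A]$ is trivial. For the successor step, suppose $A_\al\leq[G,A]$. The hypothesis that $[G,A_{\al+1}/A_\al]=A_{\al+1}/A_\al$ translates, after lifting from the quotient, into the equality $A_{\al+1}=[G,A_{\al+1}]A_\al$. Since $A_{\al+1}\leq A$ we have $[G,A_{\al+1}]\leq[G,A]$, and combining this with the inductive hypothesis $A_\al\leq[G,A]$ yields $A_{\al+1}\leq[G,A]$. For a limit ordinal $\la$, the series is ascending, so $A_\la=\bigcup_{\be<\la}A_\be$ (the union of $G$-invariant subgroups already contained in $[G,A]$ by induction), and therefore $A_\la\leq[G,A]$.

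The only subtlety worth flagging is the routine lifting argument in the successor step: from $[G,A_{\al+1}/A_\al]=A_{\al+1}/A_\al$ one must extract the subgroup identity $A_{\al+1}=[G,A_{\al+1}]A_\al$, and then use the fact that $[G,A_{\al+1}]$, being generated by commutators $[g,a]$ with $g\in G$ and $a\in A_{\al+1}\leq A$, sits inside $[G,A]$. There is no real obstacle here; the argument is a textbook transfinite induction, and the statement should be viewed as a convenient packaging of the standard principle that ``perfect-by-perfect is perfect'' extended through transfinite ascending series.
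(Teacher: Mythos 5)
Your proof is correct and follows essentially the same route as the paper's: a transfinite induction along the given ascending series, using the lifted identity $A_{\alpha+1}=[G,A_{\alpha+1}]A_{\alpha}$ at successor steps and the union property at limits. The only difference is the choice of inductive statement --- the paper proves the stronger claim $[G,A_{\alpha}]=A_{\alpha}$ at every stage (which requires the identity $[G,A_{\alpha}A_{\alpha-1}]=[G,A_{\alpha}][G,A_{\alpha-1}]$ at successor steps), whereas you prove only $A_{\alpha}\leq [G,A]$, which makes the successor step marginally more economical; both give $[G,A]=A$ at $\alpha=\gamma$.
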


\begin{proof}
The stated condition implies that $[G,A_{\al+1}]A_{\al}=A_{\al+1}$ for all $\al<\ga$.
We use transfinite induction on $\al$ to prove that $[G,A_{\al}]=A_{\al}$ for all $\al<\ga$. The case $\al=1$ is clear, so now suppose that $[G, A_{\be}]=G_{\be}$ for all $\be<\al$.

If $\al$ is a limit ordinal, then $A_{\al}=\cup_{\be<\al}A_{\be}$, so if $d\in A_{\al}$ there exists $\be<\al$ such that $d\in A_{\be} $ and hence $d\in [G, A_{\be}]$. Thus $d\in [G,A_{\al}]$ and then $A_{\al}=[G,A_{\al}]$ as required.

If $\al-1$ exists, then we have $[G, A_{\al}]A_{\al-1}=A_{\al}$ and $[G,A_{\al-1}]=A_{\al-1}$. Consequently we have
\[
[G,A_{\al}]=[G,A_{\al}A_{\al-1}]=[G,A_{\al}][G,A_{\al-1}]=[G,A_{\al}]A_{\al-1}=A_{\al}.
\]

In particular, when $\al=\ga$ we have $[G,A]=A$ as required.
\end{proof}

If $G$ is a group and $B,C$ are normal subgroups of $G$ such that $B\leq C$, then the factor $C/B$ is called \emph{$G$-central} if $C_G(C/B)=G$, or equivalently $[G,C]\leq B$. Also $C/B$ is called \emph{$G$-eccentric} if $C_G(C/B)\neq G$.

Let $A$ be a normal subgroup of $G$. The \emph{upper $G$-central series} of $A$ is the series
\[
1=A_0\leq A_1\leq \dots\leq  A_{\al}\leq A_{\al+1}\leq \dots A_{\ga}
\]
where $A_1 = \zeta_G(A) = \{a\in A| [a,g]=1\text{ for all }g\in G\}, A_{\al+1}/A_{\al}= \zeta_{G/A_{\al}}(A/A_{\al})$ for all ordinals $\al <\ga$, $A_{\la}=\cup_{\be<\la}A_{\be}$ for all limit ordinals $\la<\ga$ and $\ze_{G/A_{\ga}}(A/A_{\ga})=1$. We note that every subgroup in this series is $G$-invariant.When we wish to be more specific we denote the term $A_{\al}$ by $\ze_{G,\al}(A)$.

The last term $A_{\ga}$ of this series is called \emph{the upper $G$-hypercentre} of $A$ and is denoted by $\ze_{G,\infty}(A)$.

If  $A=A_{\gamma}$,  then  A  is called  \emph{$G$-hypercentral}; if  $\gamma$  is finite, then  $A$ is called  
\emph{$G$-nilpotent}.

A normal subgroup $A$ of $G$ is said to be  \emph{$G$-hypereccentric} if there is an ascending series 
 \[    
1 \leq C_0 \leq   C_1  \leq \dots  \leq C_{\alpha} \leq \dots\leq C_{\gamma} = A
\]
of  $G$-invariant subgroups   such that each factor  $C_{\alpha + 1}/C_{\alpha}$   is a  $G$-eccentric $G$-chief factor, for every  $\alpha <\gamma$. 

With these definitions concluded we may now state the following immediate corollary of Lemma~\ref{l:3}.

\begin{corollary}\label{c:4}
Let $G$ be a group and let $A$ be a normal subgroup of $G$. If $A$ is $G$-hypereccentric, then $[A,G]=A$.
\end{corollary}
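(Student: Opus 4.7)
The plan is to reduce Corollary~\ref{c:4} directly to Lemma~\ref{l:3} by verifying its hypothesis on each factor of the given $G$-hypereccentric series. So let
\[
1=C_0\leq C_1\leq\dots\leq C_\alpha\leq\dots\leq C_\gamma=A
\]
be an ascending series of $G$-invariant subgroups whose factors $C_{\alpha+1}/C_\alpha$ are all $G$-eccentric $G$-chief factors. It suffices to prove $[G,C_{\alpha+1}/C_\alpha]=C_{\alpha+1}/C_\alpha$ for every $\alpha<\gamma$, and then Lemma~\ref{l:3} gives $[G,A]=A$ at once.

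Fix an $\alpha<\gamma$ and set $V=C_{\alpha+1}/C_\alpha$. The key point is that $[G,V]$ is itself a $G$-invariant subgroup of $V$ (it is normal in $G$ and contained in $V$), so by the $G$-chief property of $V$ either $[G,V]=1$ or $[G,V]=V$. The first alternative would mean $C_G(V)=G$, i.e.\ $V$ is $G$-central, contradicting the assumption that $V$ is $G$-eccentric. Therefore $[G,V]=V$, as desired.

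With the hypothesis of Lemma~\ref{l:3} verified for the series $\{C_\alpha\}$, Lemma~\ref{l:3} immediately yields $[G,A]=A$, completing the proof. There is essentially no obstacle here — the only substantive content is the observation that on a chief factor the commutator with $G$ is either trivial or the whole factor, which is exactly the dichotomy encoded in the definitions of $G$-central and $G$-eccentric.
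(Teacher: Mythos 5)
Your proof is correct and is precisely the argument the paper intends: the paper presents this as an immediate consequence of Lemma~\ref{l:3}, and the only content to supply is exactly your observation that on a $G$-chief factor the subgroup $[G,V]$ is either trivial or all of $V$, with triviality ruled out by $G$-eccentricity.
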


A normal subgroup $A$ of $G$ is said to be  \emph{$G$-hyperfinite} if it has an ascending series 
 \[    
1 \leq C_0 \leq   C_1  \leq \dots  \leq C_{\alpha} \leq \dots\leq C_{\gamma} = A
\]
of  $G$-invariant subgroups   such that each factor  $C_{\alpha + 1}/C_{\alpha}$   is finite,  for every  $\alpha <\gamma$. 

We say that the  normal abelian subgroup $A$  has \emph{the $Z(G)$-decomposition} if
\[
A = \zeta_{G, \infty} (A) \oplus \eta_{G, \infty}(A),
\]
where  $ \eta_{G, \infty}(A)$  is the maximal  $G$-hypereccentric  $G$-invariant subgroup of $A$. This concept was introduced by D. I. Zaitsev~\cite{dZ79}.

The intersection $\mon_G(A)$ of all nontrivial $G$-invariant subgroups of $A$ is called the \emph{$G$-monolith} of $A$.  We say that $A$ is \emph{$G$-monolithic} if its $G$-monolith is nontrivial. In this case the $G$-monolith of $A$ is the unique minimal $G$-invariant subgroup of $A$.

\begin{lemma}\label{l:5}
Let $G$ be a group and let $A$ be an abelian normal subgroup of $G$ such that  $G/A$ is a finitely generated nilpotent group. Suppose that $A$ is $G$-monolithic with $G$-monolith $M$. If $M$ is not $G$-central, then $A$ is $G$-hyperfinite and $G$-hypereccentric.
\end{lemma}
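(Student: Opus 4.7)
View $A$ as a module over $R := \mathbb{Z}[G/A]$; since $G/A$ is finitely generated nilpotent, $R$ is Noetherian and every simple $R$-module is finite (both by P.~Hall). In particular $M$, being a simple $R$-submodule of $A$, is finite; the same applies to every simple $R$-subquotient that will appear in the series we construct. Thus $G$-hyperfiniteness reduces to producing an ascending $G$-invariant series of $A$ with chief factors, and $G$-hypereccentricity reduces to those factors being $G$-eccentric.

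Before constructing the series, I would show $\zeta_{G,\infty}(A) = 0$ by transfinite induction. Base case: $\zeta_G(A)$ is a $G$-invariant subgroup of $A$, so if nonzero it contains the monolith $M$, forcing $M$ to be $G$-central and contradicting the hypothesis; hence $\zeta_G(A) = 0$. Successor step: given $\zeta_{G,\alpha}(A) = 0$ one has $\zeta_{G,\alpha+1}(A) = \zeta_G(A) = 0$; the limit step is clear.

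The core construction is by transfinite recursion: $C_0 = 0$; at a successor, $C_{\alpha+1}/C_\alpha$ is chosen to be a $G$-eccentric minimal $G$-invariant subgroup of $A/C_\alpha$; at a limit $\lambda$, $C_\lambda = \bigcup_{\beta<\lambda} C_\beta$. By cardinality the series terminates at some $C_\gamma = A$, and its chief factors are both finite and $G$-eccentric, yielding both conclusions of the lemma. What must be proved at each stage is the existence of a $G$-eccentric minimal $G$-invariant subgroup of $A/C_\alpha$. The idea is to exploit that $A$, being monolithic with simple monolith $M$, is an essential extension of $M$ as an $R$-module, so embeds in the $R$-injective hull $E_R(M)$. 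When $G/A$ is abelian, $E_R(M)$ is $R$-artinian and its every simple $R$-subquotient is isomorphic to $M$; this transfers to $A$ and hence to each $A/C_\alpha$, giving both existence of a simple submodule and its $G$-eccentricity (since $M$ is $G$-eccentric by hypothesis). The general finitely generated nilpotent case uses analogous facts for the non-commutative polycyclic group ring.

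The main obstacle is this last module-theoretic claim --- $R$-artinianity of $A$ together with identification of all simple $R$-subquotients of $A$ with $M$ --- especially when $G/A$ is non-abelian. An alternative route sidestepping the direct construction is to invoke Zaitsev's $Z(G)$-decomposition from \cite{dZ79}, yielding $A = \zeta_{G,\infty}(A) \oplus \eta_{G,\infty}(A)$; combined with $\zeta_{G,\infty}(A) = 0$ this forces $A = \eta_{G,\infty}(A)$, so $A$ is $G$-hypereccentric, and Hall's finiteness then upgrades it to $G$-hyperfinite.
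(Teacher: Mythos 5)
Your overall strategy (kill $\zeta_{G,\infty}(A)$ using the monolith, then exhibit $A$ as $G$-hypereccentric with finite chief factors via Hall) has the right shape, and your argument that $\zeta_{G,\infty}(A)=1$ is correct. But both of your routes share the same genuine gap: you never establish the fact on which the whole argument turns, namely that $A$ is \emph{locally finite as a $G$-module} --- every finitely generated $G$-invariant subgroup of $A$ is finite. In your first route this surfaces as your own acknowledged ``main obstacle'': without some finiteness you cannot guarantee that a nonzero quotient $A/C_{\alpha}$ has any minimal $G$-invariant subgroup at all (a module over a Noetherian ring need not have a simple submodule, e.g.\ $\mathbb{Z}$ over $\mathbb{Z}$; note that the quotients $A/C_{\alpha}$ are not monolithic even though $A$ is), and the injective-hull/artinian sketch is not available for general $A$ and, as you concede, is not even clearly formulated when $G/A$ is non-abelian. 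In your second route the gap is that Zaitsev's $Z(G)$-decomposition is not a formal identity valid for every abelian normal subgroup: it is a theorem with hypotheses, and the paper invokes \cite[Corollary 1.6.5]{DKS17} only for a \emph{finite} $G$-invariant subgroup; for general $A$ the maximal $G$-hypereccentric subgroup $\eta_{G,\infty}(A)$ need not exist or behave well, so ``$A=\eta_{G,\infty}(A)$'' cannot simply be read off from $\zeta_{G,\infty}(A)=1$.

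The missing idea is to use Hall's theorem in its group form, not only its module form. Write $G=SA$ with $S$ finitely generated; for $1\neq w\in M$ and any finite subset $X$ of $A$ containing $w$, put $B=\langle X\rangle^{G}=\langle X\rangle^{S}$ and $V=BS$, and choose $R$ maximal among normal subgroups of $V$ with $R\cap M=1$. Then $V/R$ is a finitely generated abelian-by-nilpotent monolithic group, hence finite by Hall's Theorem~1*, and since $R\cap B$ is $G$-invariant and meets $M$ trivially it is trivial, so $B\cong BR/R$ is finite. Only now does the $Z(G)$-decomposition apply legitimately --- to the finite module $B$ --- and, since $\zeta_{G,\infty}(A)=1$, each such $B$ is $G$-hypereccentric with a finite chief series of finite eccentric factors; taking the union over all finite subsets $X$ gives the ascending series for $A$. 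Your observation that simple $\mathbb{Z}[G/A]$-modules are finite is true, but it only tells you the chief factors are finite once you know they exist; it does not produce the series.
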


\begin{proof}

Since $G/A$ is finitely generated there is a finitely generated subgroup $S$ such that $G=SA$ and since $A$ is abelian,  a subgroup of $A$ is $G$-invariant if and only it it is $S$-invariant.

Let $1\neq w\in M$.  %Since $M$ is a minimal normal subgroup of $G$ we have $M=\cyclic{w}^G=\cyclic{w}^S$. 
Let $X$ be an arbitrary finite subset of $A$ containing $w$ and let $B=\cyclic{X}^G=\cyclic{X}^S$.  Clearly $M\leq B$. Set $V=BS$ so that $V$ is a finitely generated abelian-by-nilpotent group. Let 
\[
\mathcal{S}=\{Y| Y\norm V \text{ and } Y\cap M=1\}
\]
and let $R$ be a maximal element of $\mathcal{S}$, which we order by inclusion. The maximal choice of $R$ shows that every nontrivial normal subgroup of $V/R$ has nontrivial intersection with $MR/R$ and it follows that $V/R$ is monolithic with monolith $MR/R$. By a result of P. Hall~\cite[Theorem 1*]{pH59} it follows that $V/R$ is finite.  Furthermore, $R\cap B$ is $G$-invariant and $R\cap M=1$, so $R\cap B=1$.  Thus $B\cong BR/R$ which implies that $B$ is finite. Hence for every finite subset $X$ of $A$ the subgroup $\cyclic{X}^G$ is finite.

Since $B$ is finite it has a $Z(G)$-decomposition, so $B=\ze_{G,\infty}(B)\oplus \eta_{G,\infty}(B)$ (see \cite[Corollary 1.6.5]{DKS17}). However $M$ is not $G$-central, so $\ze_{G,\infty}(A)=1$ and it follows that $B$ is $G$-hypereccentric. Thus $B$ has a finite series consisting of $G$-invariant subgroups whose factors are finite and $G$-eccentric. It is now easy to see that $A$ is $G$-hyperfinite and $G$-hypereccentric.
\end{proof}

We may immediately apply Lemma~\ref{l:5} and Corollary~\ref{c:4} to deduce the following.

\begin{corollary}\label{c:6}
Let $G$ be a group and let $A$ be an abelian normal subgroup of $G$ such that  $G/A$ is a finitely generated nilpotent group. Suppose that $A$ is $G$-monolithic with $G$-monolith $M$. If $M$ is not $G$-central, then $[A,G]=A$.
\end{corollary}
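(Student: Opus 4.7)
The plan is to chain together the two results just established. Under the hypotheses of the corollary, Lemma~\ref{l:5} applies verbatim: $A$ is an abelian normal subgroup of $G$, the quotient $G/A$ is finitely generated nilpotent, $A$ is $G$-monolithic with monolith $M$, and $M$ is not $G$-central. Hence the lemma furnishes, in particular, the conclusion that $A$ is $G$-hypereccentric, i.e.\ it admits an ascending series of $G$-invariant subgroups whose factors are $G$-eccentric $G$-chief factors.

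With $G$-hypereccentricity in hand, Corollary~\ref{c:4} applies directly and yields $[A,G]=A$, which is exactly the conclusion sought. So the whole proof consists of two sentences: invoke Lemma~\ref{l:5} to get $G$-hypereccentricity, then invoke Corollary~\ref{c:4} to convert $G$-hypereccentricity into the equality $[A,G]=A$.

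There is no real obstacle here, since all of the work has been absorbed into Lemma~\ref{l:5} (the construction, via P.~Hall's theorem on finitely generated abelian-by-nilpotent groups, that $G$-invariant finitely generated pieces of $A$ are finite and admit a $Z(G)$-decomposition) and into Lemma~\ref{l:3} (the transfinite induction showing $[G,A]=A$ whenever all factors of a $G$-invariant ascending series have the self-commutator property). The only thing to double-check is that the hypotheses of Lemma~\ref{l:5} match those of the corollary word for word, which they do. Thus the proof is essentially a one-line deduction.
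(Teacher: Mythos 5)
Your proof is correct and is exactly the argument the paper intends: the authors state that the corollary follows immediately by applying Lemma~\ref{l:5} (to get that $A$ is $G$-hypereccentric) and then Corollary~\ref{c:4} (to convert that into $[A,G]=A$). Nothing further is needed.
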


\begin{lemma}\label{l:7}
Let $G$ be a group and let $A$ be an abelian normal subgroup of $G$ such that  $G/A$ is a finitely generated nilpotent group.  If $G$ is not locally nilpotent, then $G$ contains a proper contranormal subgroup.
\end{lemma}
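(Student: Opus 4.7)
My plan is to prove the contrapositive: if $G$ satisfies the stated hypotheses and is contranormal-free, then $G$ is locally nilpotent; equivalently, starting from $G$ not locally nilpotent, I exhibit a proper contranormal subgroup. The first reduction is to observe that $A$ cannot be $G$-hypercentral. Indeed, if $A \leq \zeta_\infty(G)$ then $G/\zeta_\infty(G)$ is a nilpotent quotient of $G/A$ with trivial centre, so $G=\zeta_\infty(G)$ is hypercentral and hence locally nilpotent. Thus $Z:=\zeta_{G,\infty}(A) \lneq A$; and since a proper contranormal of $G/Z$ lifts to one of $G$ by Lemma~\ref{l:1}(iii), I may assume outright that $\zeta_G(A)=1$ and $A\neq 1$. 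This forces $[A,G]\neq 1$, for otherwise $A\leq\zeta_G(A)=1$.

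The main step is to locate a $G$-invariant subgroup $B \lneq A$ such that $A/B$ is $G$-monolithic with $G$-eccentric monolith, so that Corollary~\ref{c:6} applies. For a chosen nontrivial $w\in A$, Zorn's lemma yields a $G$-invariant $B \leq A$ maximal with $w\notin B$; by maximality $A/B$ is $G$-monolithic with monolith $M/B$ where $M=\langle w\rangle^G\cdot B$. The monolith $M/B$ is $G$-central precisely when $[G,\langle w\rangle^G]\leq B$, and the crucial point is that $w$ can be chosen so that this fails. I expect to use the hypotheses $\zeta_G(A)=1$ and $[A,G]\neq 1$, for instance by picking $w$ inside $[A,G]$ so that some conjugate $[g,w]$ cannot be absorbed into a proper $G$-invariant subgroup avoiding $w$. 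This step, verifying the existence of a $G$-eccentric monolith, is where I anticipate the main technical obstacle.

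With such $B$ in hand, Corollary~\ref{c:6} applies to $A/B$ as an abelian normal subgroup of $G/B$ whose quotient $(G/B)/(A/B)\cong G/A$ is finitely generated nilpotent, yielding $[A/B, G/B]=A/B$, i.e., $[A,G]\cdot B=A$. Now pick a finitely generated $S\leq G$ with $SA=G$ (possible because $G/A$ is finitely generated). Because $G=SA$ and $A$ is abelian, $[S,A]=[G,A]$, and Lemma~\ref{l:2} applied inside $G/B$ produces
\[
(SB/B)^{G/B} \;=\; (SB/B)\cdot [S,A]\,B/B \;=\; (SB/B)(A/B) \;=\; G/B,
\]
so $SB/B$ is contranormal in $G/B$ and $SB$ is contranormal in $G$ by Lemma~\ref{l:1}(iii).

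It remains to force $SB\neq G$, equivalently $(S\cap A)B \neq A$. If $G$ is not finitely generated then I have enough freedom to choose $S$ with $S\cap A$ small enough that $(S\cap A)B$ misses some element of $A$. If instead $G$ is finitely generated, then $G$ is a finitely generated abelian-by-nilpotent group and hence residually finite by a classical theorem of P.~Hall; since $G$ is not (locally) nilpotent it admits a finite non-nilpotent quotient $G/N$, and any finite non-nilpotent group contains a non-normal maximal subgroup, which is contranormal by Lemma~\ref{l:1}(v) and pulls back to a proper contranormal of $G$ via Lemma~\ref{l:1}(iii). Either branch produces the desired proper contranormal subgroup.
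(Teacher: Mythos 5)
Your overall architecture matches the paper's: produce a $G$-invariant subgroup $D<A$ with $A/D$ $G$-monolithic and the monolith $G$-eccentric, apply Corollary~\ref{c:6} to get $A=[G,A]D=[S,A]D$, and then show $SD$ is a proper contranormal subgroup via Lemma~\ref{l:2}. But the step you flag as "the main technical obstacle" is in fact the entire content of the lemma, and you have not supplied it. Your Zorn's lemma construction does yield a $G$-monolithic quotient $A/B$ with monolith $\langle w\rangle^G B/B$, but nothing you say forces that monolith to be $G$-eccentric: the hypotheses you propose to use ($\zeta_G(A)=1$ and $[A,G]\neq 1$) do not encode non-local-nilpotency, and it is precisely the hypothesis that $G$ is not locally nilpotent that must enter here. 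The paper uses it as follows: since $G=SA$ with $S$ finitely generated and $G$ is not locally nilpotent, some finite subset $M$ of $A$ generates a $G$-invariant subgroup $B=\langle M\rangle^G=\langle M\rangle^S$ that is not $G$-nilpotent, and then \cite[Corollary 13.2]{KOS02} (a module-theoretic fact about finitely generated modules over group rings of finitely generated nilpotent groups, in the tradition of Hall) produces a maximal $G$-invariant $C\leq B$ with $B/C$ a $G$-eccentric minimal $G$-invariant factor; the monolithic quotient is then built \emph{around} that eccentric factor by taking $D$ maximal with $D\cap B=C$. Without this input, or something equivalent, your proof does not go through.

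There is a second, smaller gap in the properness of $SB$. The paper disposes of this at the outset by factoring out the normal subgroup $S\cap A$ (normal in $G=SA$ because $A$ is abelian), after which $SD=G$ would force $A=D(A\cap S)=D$, a contradiction. Your two-branch treatment does not work as stated: in the non-finitely-generated case, "enough freedom to choose $S$" is not an argument; and in the finitely generated case, residual finiteness of $G$ does \emph{not} by itself imply that a non-nilpotent $G$ has a finite non-nilpotent quotient --- that implication for finitely generated abelian-by-nilpotent (more generally hyper(abelian or finite)) groups is Robinson's theorem \cite{dR70}, a result of comparable depth to the corollaries being proved here and one the paper deliberately avoids invoking. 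Adopting the $S\cap A=1$ normalization repairs this part cleanly, but the eccentric-monolith step remains an essential missing piece.
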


\begin{proof}
Since $G/A$ is finitely generated there is a finitely generated subgroup $S$ such that $G=SA$. Since $A$ is abelian, a subgroup of $A$ is $G$-invariant if and only if it is $S$-invariant. Factoring by the normal subgroup $S\cap A$ if necessary, we may assume that $S\cap A=1$.

Since $G$ is not locally nilpotent there is a finite subset $M$ of $A$ such that $B=\cyclic{M}^G=\cyclic{M}^S$ is not $S$-nilpotent. It follows from \cite[Corollary 13.2]{KOS02} that $B$ contains a maximal $G$-invariant subgroup $C$ with the property that $B/C$ is $G$-eccentric. The choice of $C$ implies that  $B/C$ is a minimal $G$-invariant subgroup of $A/C$.

Let
\[
\mathcal{S}=\{Y| Y\norm G, Y\leq A \text{ and }Y\cap B=C\}.
\]
Ordering $\mathcal{S}$ by inclusion, we may choose a maximal element $D$ of $\mathcal{S}$ and clearly $D\neq A$. If $E$ is  a $G$-invariant subgroup of  $A$ strictly containing $D$, then the minimality of $B/C$ implies that $E\cap B=B$. In particular, this implies that $A/D$ is $G$-monolithic with $G$-monolith $BD/D$.  Corollary~\ref{c:6} implies that $A/D=[G/D,A/D]=[G,A]D/D$ so $A=[G,A]D$. But $G=SA$ and $A$ is abelian so we have $A=[S,A]D$. 

Let $K=SD$. We claim that $K$ is a proper contranormal subgroup of $G$.

Indeed, if $K=G$,  then $A=A\cap SD=D(A\cap S)=D$, a contradiction. Also 
\[
K^G=S^GD=S^AD=S[S,A]D=SA=G,
\]
as required, using Lemma~\ref{l:2}.
\end{proof}

We state the clear consequence of this result.

\begin{corollary}\label{c:8}
Let $G$ be a group and let $A$ be an abelian normal subgroup of $G$ such that  $G/A$ is a finitely generated nilpotent group.  If $G$ is contranormal-free,  then $G$ is locally nilpotent.
\end{corollary}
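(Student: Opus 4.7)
The plan is to derive Corollary~\ref{c:8} directly from Lemma~\ref{l:7} by contraposition, since the hypotheses of the corollary match those of the lemma exactly. So essentially no new work is required; the proof is a one-line deduction.

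More precisely, I would argue as follows. Suppose, for contradiction, that $G$ is a contranormal-free group satisfying the hypothesis of the corollary, but that $G$ is \emph{not} locally nilpotent. Then $G$ still has an abelian normal subgroup $A$ with $G/A$ finitely generated and nilpotent, so all assumptions of Lemma~\ref{l:7} are in force. The conclusion of that lemma produces a proper contranormal subgroup of $G$, which contradicts the fact that $G$ is contranormal-free. Hence $G$ must be locally nilpotent.

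Since the corollary is an immediate contrapositive reformulation of Lemma~\ref{l:7}, there is no real obstacle to anticipate: the entire technical content, including the monolith analysis, the use of Hall's theorem on finitely generated abelian-by-nilpotent groups, the $Z(G)$-decomposition of finite subgroups, and the construction of the explicit contranormal subgroup $K = SD$ via Lemma~\ref{l:2}, has already been carried out in the proof of Lemma~\ref{l:7}. The corollary thus requires no additional lemmas or calculations beyond invoking that result.
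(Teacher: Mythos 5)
Your proof is correct and is exactly the argument the paper intends: Corollary~\ref{c:8} is stated there as "the clear consequence" of Lemma~\ref{l:7}, i.e.\ its contrapositive. Nothing further is required.
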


\section{Proofs of the main results}

In order to prove Theorem~\ref{t:1} we require the following result which is interesting in its own right and strengthens Corollary~\ref{c:8}.
\begin{theorem}\label{t:1a}
Let $G$ be a group and let $L$ be a nilpotent normal subgroup of $G$ such that $G/L$ is a finitely generated nilpotent group.  If $G$ is contranormal-free, then $G$ is hypercentral. 
\end{theorem}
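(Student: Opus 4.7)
The plan is to prove Theorem~\ref{t:1a} by induction on the nilpotency class $c$ of $L$.

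\textbf{Base case $c=1$.} Here $L$ is abelian, so Corollary~\ref{c:8} gives $G$ locally nilpotent. Choose a finitely generated $S\leq G$ with $G=SL$. For any finite $X\subset L$, the subgroup $\langle S,X\rangle$ is finitely generated and hence nilpotent, of some class $n$, so the normal closure $T=\langle X\rangle^S$ lies in $\zeta_n(\langle S,X\rangle)$. Because $L$ is abelian and $T\subseteq L$, the identity $[t,s\ell]=[t,s]$ (for $t\in T$, $s\in S$, $\ell\in L$) iterates to $[T,{}_{n}G]=1$, so $T\leq\zeta_n(G)$. Hence $L\leq\zeta_\omega(G)$, and since $G/L$ is finitely generated nilpotent, $G/\zeta_\infty(G)$ is a hypercentral quotient with trivial centre, hence trivial, so $G$ is hypercentral.

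\textbf{Inductive step $c\geq 2$.} Let $N=Z(L)$, abelian and normal in $G$. The quotient $G/N$ is contranormal-free by Lemma~\ref{l:1}(ii), contains $L/N$ nilpotent of class $c-1$, and satisfies $(G/N)/(L/N)=G/L$ finitely generated nilpotent, so by the induction hypothesis $G/N$ is hypercentral. It thus suffices to prove $N\leq\zeta_\infty(G)$, since then $G/\zeta_\infty(G)$ is a hypercentral quotient of $G/N$ with trivial centre, hence trivial. Since $L$ centralizes $N$ (as $N=Z(L)$), the $G$-action on $N$ factors through $\bar G=G/C_G(N)$, a quotient of $G/L$ and therefore finitely generated nilpotent.

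Assume for contradiction that $N^*:=N\cap\zeta_\infty(G)$ is proper in $N$. Mimicking Lemma~\ref{l:7}, pick $a\in N\setminus N^*$ and let $B=\langle a\rangle^G$, a cyclic $\mathbb{Z}[\bar G]$-module. Since $\mathbb{Z}[\bar G]$ is Noetherian (Hall) and $B$ is not $\bar G$-hypercentral, $B$ is not $\bar G$-nilpotent, so \cite[Corollary~13.2]{KOS02} produces a maximal $\bar G$-invariant $C<B$ with $B/C$ a $\bar G$-eccentric chief factor. A Zorn argument supplies $D\trianglelefteq G$, $D\leq N$, maximal with $D\cap B=C$; then $N/D$ is $G$-monolithic with $G$-eccentric monolith $BD/D$, and Corollary~\ref{c:6} applied inside $N\rtimes\bar G$ (valid since $\bar G$ is finitely generated nilpotent) gives $[N,G]D=N$, i.e.\ $N=[N,S]D$, where $S$ is finitely generated with $G=SL$.

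\textbf{Main obstacle.} The construction of a concrete proper contranormal subgroup $K\leq G$ from the data $N=[N,S]D$ is the delicate step. The natural candidate $K=SD$ faces two difficulties: by Lemma~\ref{l:2}, $K^G=S[S,L]D$, so contranormality $K^G=G$ requires $L=(S\cap L)[S,L]D$, which is not immediate for non-abelian $L$; and even granted this, proving $K\neq G$ reduces via the modular law to $S\cap N=1$, whereas $S\cap N$ need not lie in $\zeta_\infty(G)$ a priori. Resolving these issues requires a careful choice of $D$ absorbing $S\cap N$ together with a preliminary reduction modulo $N^*$, and/or an enlargement of $K$ beyond $SD$ that exploits the hypercentrality of $G/N$ to cover $L$. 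This navigation constitutes the technical heart of the proof.
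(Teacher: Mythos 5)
Your base case is correct and is essentially the argument the paper uses for the central factors of $L$. The inductive step, however, contains a genuine gap, and you have located it yourself in the paragraph labelled ``Main obstacle'': the entire point of the theorem is to produce a proper contranormal subgroup from the failure of $N=Z(L)$ to lie in the hypercentre, and the candidate $K=SD$ really does fail for the reason you state. By Lemma~\ref{l:2}, $K^G=S[S,L]D$, but your module-theoretic work (the chief factor $B/C$, the maximal $D$ with $D\cap B=C$, Corollary~\ref{c:6}) only yields $N=[N,S]D$, which controls $N$ and says nothing about covering the rest of $L$; and properness of $K$ is equally unclear since $S\cap L$ need not be absorbed by $D$. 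The proposal ends by asserting that these issues can be resolved by ``a careful choice of $D$'' and ``an enlargement of $K$'', but no such construction is given, so the inductive step is not proved.

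The paper avoids this obstacle entirely, and you can too: do not try to manufacture a contranormal subgroup when $L$ is non-abelian. Instead, first apply Lemma~\ref{l:1}(iii) and Corollary~\ref{c:8} to $G/L'$ (whose relevant normal subgroup $L/L'$ \emph{is} abelian) to conclude that $G/L'$ is locally nilpotent, and then invoke Plotkin's theorem \cite[Theorem 3]{bP61b} to get that $G$ itself is locally nilpotent. Once $G$ is locally nilpotent, the only argument needed at every level of the upper central series $1=A_0\leq A_1\leq\dots\leq A_t=L$ is the elementary one from your base case: for a finite subset $M$ of $A_1=Z(L)$ the subgroup $\cyclic{M,S}$ is finitely generated nilpotent, so $B=\cyclic{M}^S$ satisfies $[B,{}_{k}S]=1$ for some $k$, and since $L$ centralizes $A_1$ this gives $[B,{}_{k}G]=1$ and hence $A_1\leq\ze_{G,\om}(L)$; iterating up the series gives $L=\ze_{G,t\om}(L)$ and then hypercentrality of $G$ from nilpotency of $G/L$. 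In short, the one place where a contranormal subgroup must actually be constructed is Lemma~\ref{l:7} (abelian $A$), and the passage to nilpotent $L$ is made through local nilpotency via Plotkin, not by redoing the construction for $Z(L)$.
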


\begin{proof}
Let $K=L'$.  By Lemma~\ref{l:1}(iii) it follows that $G/K$ contains no proper contranormal subgroups either so Corollary~\ref{c:8} shows that $G/K$ is locally nilpotent.  Thus $G$ is locally nilpotent by \cite[Theorem 3]{bP61b}.

Let $1=A_0\leq A_1\leq \dots \leq A_t=L$ denote the upper central series of $L$.

Since $G/L$ is finitely generated, there is a finitely generated subgroup $S$ such that $G=LS$.  A subgroup of $A_1$ is $S$-invariant if and only if it is $G$-invariant, because $L\leq C_G(A_1)$.

Let $M$ be an arbitrary finite subset of $A_1$. The subgroup $\cyclic{M,S}$ is a finitely generated nilpotent group. Let $B=\cyclic{M}^G=\cyclic{M}^S\leq A_1$. Clearly $B$ is a $G$-invariant finitely generated abelian subgroup of $G$. It is easy to see that for this reason there a natural number $k$ such that $B\leq \ze_{G,k}(A_1)$.  Since this is true for every finite subset of $A_1$ it follows that $A_1\leq \ze_{G,\om}(L)$.

An easy induction argument using similar reasoning shows that $L=\ze_{G, t\om}(L)$.  Since $G/L$ is nilpotent it now follows immediately that $G$ is hypercentral.
\end{proof}

There are two corollaries to Lemma~\ref{l:7} that can be deduced. They are also consequences of \cite{KOS09b} and \cite{dR70} so we merely state the second of these. The first has the following proof independent of those two papers.

\begin{corollary} \label{c:1}
Let $G$ be a finitely generated soluble-by-finite group. If $G$ is contranormal-free, then $G$ is nilpotent.
\end{corollary}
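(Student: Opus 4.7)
The plan is to argue by induction on the derived length $d$ of some normal soluble subgroup $R$ of finite index in $G$. Such an $R$ exists because $G$ is soluble-by-finite: any soluble subgroup of finite index has a normal core of finite index, and this core is itself soluble.

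For the base case $d=0$, the subgroup $R$ is trivial, so $G$ is finite. Because $G$ is contranormal-free, Lemma~\ref{l:1}(v) forces every maximal subgroup of $G$ to be normal, and then \cite[Satz III.3.11]{bH67} (the finite-group fact cited in the introduction) yields that $G$ is nilpotent.

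For the inductive step with $d\geq 1$, I would set $A=R^{(d-1)}$. This is characteristic in $R$, hence normal in $G$ (since $R$ is normal in $G$), and abelian by the definition of the derived series. The quotient $G/A$ is finitely generated as a homomorphic image of $G$, contranormal-free by Lemma~\ref{l:1}(ii), and contains the normal soluble subgroup $R/A$ of finite index whose derived length is at most $d-1$. The inductive hypothesis therefore applies to $G/A$ and tells us that $G/A$ is nilpotent. Now Corollary~\ref{c:8} applies to the pair $(A,G/A)$, showing that $G$ is locally nilpotent; since $G$ is finitely generated, this forces $G$ itself to be nilpotent.

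I do not anticipate any serious obstacle. The only mild point of care is to verify at each inductive step that the chosen $A$ is genuinely $G$-normal (and not merely characteristic inside some auxiliary soluble subgroup), but this is immediate from the normality of $R$ in $G$. All the substantive work is already encoded in Corollary~\ref{c:8}, which rests on Lemma~\ref{l:7}; the proof itself is just a routine devissage that peels off one abelian layer at a time, keeping us entirely independent of \cite{KOS09b} and \cite{dR70}.
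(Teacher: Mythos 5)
Your proof is correct, and it rests on the same key ingredient as the paper's proof --- Corollary~\ref{c:8}, applied to a quotient of $G$ that is abelian-by-(finitely generated nilpotent) --- but the induction is organized quite differently. The paper first disposes of the finite quotient (via Lemma~\ref{l:1} and Huppert's theorem) to conclude that $G$ is soluble, and then runs an argument along the derived series of $G$ itself: assuming $\gamma_t(G)\leq G^{(k)}$, it applies Corollary~\ref{c:8} to $G/\gamma_t(G)'$ to force some later term of the lower central series inside $G^{(k+1)}$, whence $G/G^{(n)}$ is nilpotent for every $n$ and solubility finishes the job. You instead induct on the derived length of a normal soluble subgroup $R$ of finite index, peeling off the last abelian term $A=R^{(d-1)}$ and absorbing the finite case into the base of the induction rather than treating it as a preliminary reduction; the conclusion then comes from the fact that a finitely generated locally nilpotent group is nilpotent. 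Your version is arguably tidier, since it avoids the lower-central-series bookkeeping and never needs the intermediate observation that $G$ itself is soluble. One small correction: to see that $G/A$ is contranormal-free you should cite Lemma~\ref{l:1}(iii) (a proper contranormal subgroup of the quotient pulls back to a proper contranormal subgroup of $G$), not part (ii), which goes in the opposite direction; part (iii) is exactly what the paper invokes at the corresponding step.
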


\begin{proof}
Let $H$ be a soluble normal subgroup of $G$ such that $G/H$ is finite. By Lemma~\ref{l:1}(iii) it follows that $G/H$ has no proper contranormal subgroups either so \cite[Satz III.3.11]{bH67} shows that $G/H$ is nilpotent. Consequently $G$ is actually soluble.  

Let $\ga_{t}(G)$ denote the $t$th term of the lower central series of $G$ and let $G^{(k)}$ denote the $k$th term of the derived series of $G$. Suppose that for some natural number $k$ there is a natural number $t$ such that  $\ga_t(G)\leq G^{(k)}$ (this of course being true for $k=2$). If $\ga_t(G)\neq 1$, then $\ga_{t}(G)'\lneqq \ga_t(G)$.  Then $G/\ga_t(G)'$ is an extension of an abelian group by a finitely generated nilpotent group . By Lemma~\ref{l:1}(iii) $G/\ga_t(G)'$ has no proper contranormal subgroups so Corollary~\ref{c:8} implies that $G/\ga_t(G)'$ is locally nilpotent. Since it is also finitely generated it follows that $G/\ga_t(G)'$ is nilpotent, so there is a natural number $l$ such that $\ga_l(G)\leq \ga_t(G)'\leq G^{(k+1)}$. It follows that, for all $n$, $G/G^{(n)}$ is nilpotent and since $G$ is soluble we have that $G$ is nilpotent. 
\end{proof}

\begin{corollary} \label{c:2}
Let $G$ be a finitely generated group and suppose that $G$ has an ascending series of normal subgroups whose factors are abelian or finite. If $G$ is contranormal-free, then $G$ is nilpotent.
\end{corollary}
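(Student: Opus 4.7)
The plan is to induct on the ordinal length $\gamma$ of the given ascending series, peeling off the first factor $G_1$ and applying Corollary~\ref{c:8} at each step. A preliminary Frattini-type argument shows that every finite $G$-chief factor of $G$ must be abelian; this ensures that any finite normal subgroup of $G$ appearing in the series is soluble, and it makes a secondary induction on $|G_1|$ go through when $G_1$ is finite and non-abelian.

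For the preliminary step, suppose to the contrary that $A/B$ is a non-abelian finite $G$-chief factor. Passing to $G/B$, which is again finitely generated and contranormal-free by Lemma~\ref{l:1}(ii), one may assume $B = 1$, so $M := A$ is a minimal normal subgroup of $G$ isomorphic to $T^k$ for some finite non-abelian simple group $T$. Pick a prime $p$ dividing $|T|$ and a Sylow $p$-subgroup $P$ of $M$ that is not normal in $M$; such a $P$ exists because a non-abelian finite simple group has a non-normal Sylow subgroup for some prime. By the Frattini argument $G = M \cdot N_G(P)$, and $N_G(P)$ is a proper subgroup of $G$ since $P$ is not normal in $G$. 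Now $N_G(P)^G \cap M$ is a non-trivial $G$-invariant subgroup of the minimal normal subgroup $M$, hence equals $M$; therefore $N_G(P)^G \supseteq M \cdot N_G(P) = G$, making $N_G(P)$ a proper contranormal subgroup of $G$, contradicting the hypothesis. Consequently every finite $G$-chief factor of $G$ is abelian, and in particular every finite normal subgroup of $G$ is soluble.

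For the main induction, finite generation of $G$ rules out $\gamma$ being a limit ordinal (a finite generating set would lie in some $G_\alpha$ with $\alpha < \gamma$, forcing $G = G_\alpha$), so assume $\gamma = \beta + 1$, and by discarding any initial trivial terms, that $G_1 \neq 1$. The quotient $G/G_1$ is finitely generated, contranormal-free by Lemma~\ref{l:1}(ii), and inherits an ascending normal (abelian or finite) series of length $\beta < \gamma$, so by the inductive hypothesis $G/G_1$ is nilpotent. If $G_1$ is abelian, Corollary~\ref{c:8} immediately gives that $G$ is locally nilpotent, and hence nilpotent by finite generation. If $G_1$ is finite and non-abelian, it is soluble by the preliminary step, and a minimal non-trivial $G$-invariant subgroup $B$ of $G_1$ is abelian (being a $G$-chief factor of $G$) and properly contained in $G_1$. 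The quotient $G/B$ satisfies the same hypotheses with a strictly smaller first factor $G_1/B$, so by a secondary induction on $|G_1|$ (whose base case, $G_1$ abelian, is handled by the first case), $G/B$ is nilpotent; then Corollary~\ref{c:8} applied with $A = B$ yields $G$ locally nilpotent, hence nilpotent.

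The main obstacle is the Frattini-argument step used to rule out non-abelian chief factors; once it is in place, the induction itself runs mechanically on the lemmas already established.
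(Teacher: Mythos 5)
Your preliminary Frattini argument is correct and the induction does go through when the given ascending series has \emph{finite} length (where the statement is essentially Corollary~\ref{c:1}). The genuine gap is in the transfinite induction itself: peeling off the first factor does not shorten the series once $\gamma$ is infinite. If $1=G_0\leq G_1\leq\dots\leq G_{\gamma}=G$ and $\gamma=\beta+1$ with $\beta\geq\omega$ (say $\gamma=\omega+1$), then the induced series of $G/G_1$ is indexed by $\{\alpha : 1\leq\alpha\leq\gamma\}$, whose order type is $\mu+1$ where $1+\mu=\gamma$; for $\gamma\geq\omega$ this forces $\mu=\gamma$, so $G/G_1$ inherits a series of length $\gamma$, not $\beta<\gamma$, and the induction never terminates. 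Peeling from the top does not help either, since $G_{\beta}$ need not be finitely generated, and the limit-ordinal reduction only disposes of limit $\gamma$. Since a finitely generated hyper(abelian or finite) group need not admit a series of finite length (it need not be soluble-by-finite), the case your argument actually fails to reach is precisely the hard case of the corollary. (A minor additional point: the fact that quotients of contranormal-free groups are contranormal-free comes from Lemma~\ref{l:1}(iii), not (ii).)

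The paper does not give a direct inductive proof at all: it notes that the corollary is a consequence of Robinson's theorem~\cite{dR70}, which asserts that a finitely generated hyper(abelian or finite) group that is not nilpotent has a finite non-nilpotent image. Granting that, the proof is immediate: every finite image of $G$ is contranormal-free by Lemma~\ref{l:1}(iii), hence nilpotent by \cite[Satz III.3.11]{bH67}, so $G$ has no finite non-nilpotent image and must be nilpotent. To repair your argument you would need a substitute for Robinson's theorem covering series of infinite length; the lemmas established in this paper (in particular Corollary~\ref{c:8}) do not supply one, since they all require a quotient that is already known to be finitely generated nilpotent.
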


\begin{proof}[\textbf{Proof of Theorem~\ref{t:1}}]
The group $G/L$ is contranormal-free and finitely generated hyperabelian so by Corollary~\ref{c:2}, $G/L$ is nilpotent. Theorem~\ref{t:1a} now gives the result.
\end{proof}

\begin{proof}[\textbf{Proof of Theorem~\ref{t:2}}]
It follows from \cite[Theorem 4.2.1]{DKS17} that the group $G$ has a series of normal subgroups $T\leq S\leq G$ such that $T$ is locally finite with Chernikov Sylow $p$-subgroups for all primes $p$, $S/T$ is a torsion-free soluble group of finite $0$-rank and $G/S$ is finite.  Indeed, by Lemma~\ref{l:1} and \cite[Satz III.3.11]{bH67} $G/T$ is soluble.

Let $p\in \Pi(T)$.  Then $T/O_{p'}(T)$ is Chernikov (see \cite[Theorems 3.3.5 and 3.3.11]{DKS17}, for example). Certainly $O_{p'}(T)$ is $G$-invariant and Lemma~\ref{l:1}(iii) implies that $G/O_{p'}(T)$ contains no proper contranormal subgroups. It follows that $G/O_{p'}(T)$ is nilpotent by \cite[Theorem C]{KOS09} and this is true for all primes $p$. Since $\cap_{p\in\Pi(T)}O_{p'}(T)=1$, Remak's theorem shows that $G$ embeds in the Cartesian product $ \underset{p\in\Pi(T)}{\text{Cr}} G/O_{p'}(T)$. We note that a locally finite subgroup of a Cartesian product of nilpotent groups is locally nilpotent. Hence $T$ is locally nilpotent and $T=\dir{p}{\Pi(T)}{S}$, where $S_p$ is the Sylow $p$-subgroup of $T$. Clearly $O_{p'}(T)= \underset{q\neq p}{\text{Dr}}S_q$ and $T/O_{p'}(T)\cong S_p$. Since $G/O_{p'}(T)$ is nilpotent it follows that $S_p$ is nilpotent for each prime $p$. Let $n(p)$ be the nilpotency class of $G/O_{p'}(T)$.  Then we have $[S_p,_{n(p)} G]\leq O_{p'}(T)$. On the other hand $S_p$ is $G$-invariant so $[S_p, _{n(p)} G]\leq S_p$ and it follows that $[S_p,_{n(p)} G]=1$.This means that $S_p\leq \ze_{n(p)}(G)$ and since this is true for each prime $p$ it follows that $T\leq \ze_{\om}(G)$.  However $G/T $ is nilpotent so there is a natural number $k$ such that $G=\ze_{\om+k}(G)$ and $G$ is hypercentral of hypercentral length at most $\om+k$ as required.

Finally, we note that if $H$ is normal in $G$ and $\Pi(G/H)$ is finite, then this proof shows that $G/H$ is nilpotent.
\end{proof}

In conclusion, we note that Wehrfritz~\cite{bW20} has constructed a non-nilpotent (locally cyclic)-by-infinite cyclic group of special rank $2$ which is contranormal-free.  Furthermore, the well-known Heineken-Mohamed groups~\cite{HM68} are contranormal-free but of infinite section rank and are not hypercentral. Thus our theorems are best possible in that sense.

\providecommand{\bysame}{\leavevmode\hbox to3em{\hrulefill}\thinspace}
\providecommand{\MR}{\relax\ifhmode\unskip\space\fi MR }
% \MRhref is called by the amsart/book/proc definition of \MR.
\providecommand{\MRhref}[2]{%
  \href{http://www.ams.org/mathscinet-getitem?mr=#1}{#2}
}
\providecommand{\href}[2]{#2}

\end{document}